 \theoremstyle{plain}
\newtheorem{thm}{Theorem}[section]
  \theoremstyle{plain}
  \newtheorem{lem}[thm]{Lemma}
  \newtheorem{prop}[thm]{Proposition}
  \newtheorem*{thm*}{Theorem}
  \newtheorem*{rem*}{Remark}
\numberwithin{equation}{section}
\newcommand{\field}[1]{\ensuremath{\mathbb{#1}}}
\newcommand{\hh}{\field{H} \,}
\newcommand{\RR}{\field{R}}
\newcommand{\ZZ}{\field{Z}}
\newcommand{\E}{\mathcal{E}}
\DeclareMathOperator{\pr}{PSL(2,\RR)}
\begin{document}

\author{Joshua S. Friedman}\footnote{The views expressed in this article are the author's
own and not those of the U.S. Merchant Marine Academy, the Maritime Administration,
the Department of Transportation, or the United States government.}
\address{ Department of Mathematics and Science, United States Merchant Marine Academy, 300 Steamboat Road, Kings Point, NY  11024}
\email{friedmanj@usmma.edu}

\title[The distance between elliptic fixed points]{On the minimal distance between elliptic fixed points for geometrically-finite Fuchsian groups}

\begin{abstract}
Let $\Gamma$ be a geometrically-finite Fuchsian group acting on the upper half plane $\hh.$ Let $\E$ denote the set of elliptic fixed points of $\Gamma$ in $\hh.$ We give a lower bound on the minimal hyperbolic distance between points in  $\E.$ Our bound depends on a universal constant and the length of the smallest closed geodesic on $\Gamma \backslash \hh.$ 
\end{abstract}

\maketitle

\thispagestyle{empty}
\section{Introduction}
Let $\Gamma$ be a geometrically-finite Fuchsian group (see \cite[Chap. 10]{Beardon} or \cite[Chap. 4]{Katok}) acting on the upper half plane $\hh.$ The finiteness conditions guarantees that $\Gamma$ contains a finitely many conjugacy classes of maximal elliptic cyclic subgroups. Let $\E$ denote the set of elliptic fixed points of $\Gamma$ in $\hh.$ The set $\E$ has no accumulation points since $\Gamma$ is discrete. The stabilizer of each point of $\E$ corresponds to a finite cyclic group generated by a \emph{primitive} elliptic element. An interesting problem is to determine the minimal hyperbolic distance between any two elements of $\E,$ in terms of fundamental geometric qualities of the quotient orbifold $\Gamma \backslash \hh,$ such as its area, genus, number of cusps, funnels, or the length of its smallest geodesic (systole). 

Our main result is the following:
\begin{thm*}
Let $\Gamma$ be a geometrically-finite group. Let $\E$ denote the set of elliptic fixed points of $\Gamma$ in $\hh.$ Let $l_{0}$ be the length of the smallest closed geodesic on $\Gamma \backslash \hh.$ Let $z,w$ be chosen in $\E$ so that the hyperbolic distance $\rho(z,w)$ is minimal. Then
$$ \rho(z,w) \geq \min\{\tfrac{l_{0}}{2}, 0.151\dots \}.$$

If all elliptic fixed points have order greater than two, the $\tfrac{l_{0}}{2}$ term can be omitted. 

\end{thm*}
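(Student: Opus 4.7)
The plan is to take primitive elliptic generators $\sigma,\tau\in\Gamma$ of the stabilizers of $z$ and $w$, with orders $m,n\geq 2$ and corresponding rotation angles $\alpha=2\pi/m$, $\beta=2\pi/n$, and to bound $d:=\rho(z,w)$ using the isoceles-triangle identity
\[
\sinh\bigl(\rho(\sigma w,w)/2\bigr)=\sin(\alpha/2)\sinh(d)
\]
together with the trace identity
\[
\tr(\sigma\tau)=2\cos(\alpha/2)\cos(\beta/2)-2\sin(\alpha/2)\sin(\beta/2)\cosh(d)
\]
(up to sign in $\PSL(2,\RR)$), the systole hypothesis (every hyperbolic $g\in\Gamma$ has translation length $\geq l_{0}$), and the structural fact that the fixed point in $\hh$ of any non-identity elliptic element of $\Gamma$ lies in $\E$, hence by minimality at hyperbolic distance $\geq d$ from both $z$ and $w$.

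First I would dispose of the rigid case $m=n=2$. Here $\sigma$ and $\tau$ are half-turns, and a classical computation shows that $\sigma\tau$ is a hyperbolic translation along the geodesic joining $z$ and $w$ whose translation length is exactly $2d$. Since $\sigma\tau\in\Gamma$ is hyperbolic, it projects to a closed geodesic on $\Gamma\backslash\hh$, forcing $2d\geq l_{0}$, i.e.\ $d\geq l_{0}/2$. This is the origin of the $l_{0}/2$ term; it is invoked only when a pair of order-two fixed points can realize the minimum, which explains the improved bound when no such pair exists.

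For the remaining range $(m,n)\ne(2,2)$, I would aim to establish the universal bound $d\geq 0.151\ldots$ without reference to the systole. In the case $m,n\geq 3$ one has $\sin(\alpha/2),\sin(\beta/2)\leq\sqrt{3}/2$, so the trace formula yields $|\tr(\sigma\tau)|<2$ for all small $d$, forcing $\sigma\tau$ to be elliptic with a unique fixed point $p\in\hh$; a short check using uniqueness of fixed points of non-trivial elliptic M\"{o}bius transformations rules out $p\in\{z,w\}$, so $p\in\E\setminus\{z,w\}$ and minimality gives $\rho(p,z),\rho(p,w)\geq d$. Applying the hyperbolic law of sines to $\triangle zwp$—whose interior angles at $z$ and $w$ are $\alpha/2$ and $\beta/2$ and whose angle $\phi$ at $p$ is related to $\tr(\sigma\tau)$ by the hyperbolic law of cosines—produces the constraint $\sin(\phi)\leq\min\{\sin(\alpha/2),\sin(\beta/2)\}$ together with an explicit equation linking $d,\alpha,\beta,\phi$; discreteness of $\Gamma$ further forces the rotation angle of $\sigma\tau$ to be a rational multiple of $2\pi$. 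The mixed case $m=2$, $n\geq 3$ is handled in parallel by applying the same minimality and trace arguments to $\tau(z)$ and its $\tau$-orbit, which already consist of order-two points in $\E$. Minimizing $d$ over all admissible configurations with $(m,n)\ne(2,2)$ yields the universal constant $0.151\ldots$.

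The main obstacle is the non-$(2,2)$ case: the bound $0.151\ldots$ does not follow from a single trace inequality. It emerges from an optimization over the discrete admissible parameter set of triples $(\alpha,\beta,\phi)$ subject to the trigonometric identities above and to the requirement that all involved points lie in $\E$; identifying the extremal configuration (or at least verifying a numerical lower bound) and carefully treating the various subcases of the mixed situation $m=2$, $n\geq 3$ form the computational heart of the proof, while the reductions above are essentially kinematic.
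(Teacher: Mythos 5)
Your treatment of the order-$(2,2)$ case is correct and coincides with the paper's: there, Lemma~\ref{lemMain} shows that $\langle A,B\rangle$ is elementary precisely when both generators are half-turns, in which case $AB$ is hyperbolic with translation length $2\rho(z,w)$, giving $\rho(z,w)\geq l_{0}/2$; this also explains the removal of the $l_{0}/2$ term when no order-two points exist. The gap is in the complementary case. The paper does not perform any optimization over configurations there: it invokes the Marden--Yamada theorem (Lemma~\ref{lemMY}, quoted from Beardon, p.~313), which says that for elliptic $g,h$ generating a discrete non-elementary group one has $\max\{\sinh\tfrac12\rho(z,gz),\sinh\tfrac12\rho(z,hz)\}\geq C=0.1318\dots$ for \emph{every} $z\in\hh$; evaluating at the fixed point of $g$ and using the displacement identity $\sinh\tfrac12\rho(z,hz)=\sinh\rho(z,w)|\sin(\theta/2)|$ yields $\sinh\rho(z,w)\geq C/|\sin(\theta/2)|\geq 2C/\sqrt{3}$, since the smaller rotation angle satisfies $\theta\leq 2\pi/3$ in the non-elementary case. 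Your proposal replaces this known, hard theorem by ``an optimization over the discrete admissible parameter set,'' which you explicitly leave undone. That optimization is not a routine computation; it is essentially a re-proof of Yamada's theorem, and nothing in your outline supplies it.

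Worse, the constraints you actually list are provably insufficient to bound $d$ away from zero, so the optimization as posed has infimum $0$. Take $m=n=3$, so the triangle $\triangle zwp$ has angles $\pi/3$ at $z$ and at $w$ and angle $\phi<\pi/3$ at $p$ (angle sum $<\pi$). The hyperbolic law of sines gives $\sinh\rho(p,z)=\sinh\rho(p,w)=\sinh(d)\,\sin(\pi/3)/\sin(\phi)>\sinh(d)$, so the minimality constraints $\rho(p,z),\rho(p,w)\geq d$ are \emph{automatically} satisfied for every $d>0$; the condition $\sin(\phi)\leq\min\{\sin(\alpha/2),\sin(\beta/2)\}$ holds as well; and the requirement that the rotation angle $2\phi$ of $\sigma\tau$ be a rational multiple of $2\pi$ is satisfiable for arbitrarily small $d$, since rational multiples of $2\pi$ are dense and can in any case be arbitrarily small. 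Thus configurations with $d\to 0$ pass every test you impose. What actually rules them out is that as $d\to 0$ the group $\langle\sigma,\tau\rangle$ degenerates toward the Euclidean $(3,3,3)$ triangle group, and excluding this requires a genuinely deeper use of discreteness --- iteration of the construction combined with Jørgensen-type inequalities or Siegel's lower bound on the co-area of a Fuchsian group --- which is exactly the content packaged in the Marden--Yamada bound the paper cites. Until you supply that ingredient (or cite it, as the paper does), the constant $0.151\dots$ in your argument is asserted, not proved.
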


Randol \cite{Randol2} gave a universal lower bound on the area of a cylinder around a closed geodesic in a Riemann surface. One can consider our result as an analogue, giving a universal lower bound for the area of disjoint neighborhoods around elliptic cusps. 

I would like to thank Professors Bernie Maskit and Jay Jorgenson for answering questions and helpful suggestions. 

\section{Preliminaries}
Let $\hh = \{z\in\mathbb{C}\,|\,z=x+iy\,,\,y>0\}$ with the hyperbolic metric $\mathrm{d}s^{2}(z)=\frac{\mathrm{d}x^{2}+\mathrm{d}y^{2}}{y^{2}}.$ Let $\rho(z,w)$ denote the hyperbolic distance between two points in $\hh.$ Each element $\gamma \in \pr$ acts on $\hh$ via the M\"{o}bius transformation $f(z) = \frac{az+b}{cz+d}.$ Let $\Gamma$ be a geometrically-finite group. An elliptic element $\gamma$ is determined by its unique fixed point $z \in \hh,$ and a rotation angle $\theta \in [-\pi,\pi]$ about $z.$ To each point in $z \in \E$ is an primitive elliptic element of minimal rotation angle $2\pi/n$ which generates the stabilizer subgroup $\Gamma_{z},$ where $n$ is the order of the subgroup. 

A hyperbolic element $\alpha$ is conjugate in $\pr$ to $g(z) = kz,$ ( $k>1$ ). Associated to $\alpha$ is an \emph{axis,} a geodesic in $\hh$ where $\alpha$ acts as a translation with length $T_{\alpha} = \ln(k) = \inf_{z \in \hh}\rho(z,\alpha z).$ The image of this axis in the quotient orbifold $\Gamma \backslash \hh$ is a closed geodesic of length $T_{\alpha}.$

Let $\Gamma$ be a geometrically-finite Fuchsian group acting on the upper half plane $\hh.$ A subgroup $\Gamma_{0}$ is \emph{elementary} if it has a finite orbit in the closure $\overline{\hh}.$ These groups are completely characterized. In the following lemma, we collect and prove some basic facts that will be needed.

\begin{lem} \label{lemMain}
\end{lem}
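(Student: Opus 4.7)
Let $\alpha\in\gb_{z}$ and $\beta\in\gb_{w}$ be the primitive elliptic generators of orders $p,q\geq 2$. After conjugating in $\pr$ so that $z=i$ and $w=ie^{\rho(z,w)}$, an explicit $2\times 2$ matrix calculation yields
$$\tr(\alpha\beta)\;=\;2\cos(\pi/p)\cos(\pi/q)-2\sin(\pi/p)\sin(\pi/q)\cosh\rho(z,w).$$
The argument then splits on whether $\alpha\beta$ is hyperbolic, elliptic, or parabolic, i.e.\ on whether $|\tr(\alpha\beta)|$ exceeds, falls below, or equals $2$.

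Suppose first that $\alpha\beta$ is hyperbolic with translation length $T$; then $T\geq l_{0}$. A brief sign analysis rules out $\tr(\alpha\beta)>2$ (this would force $\cos(\pi/p)\cos(\pi/q)-\sin(\pi/p)\sin(\pi/q)\cosh\rho(z,w)>1$, impossible since the left side is at most $\cos(\pi/p)\cos(\pi/q)\leq 1$), so $-\tr(\alpha\beta)=2\cosh(T/2)$. Rearranging the trace identity and using $\cos(\pi/p)\cos(\pi/q)\geq 0$ and $\sin(\pi/p)\sin(\pi/q)\leq 1$ yields $\cosh\rho(z,w)\geq\cosh(l_{0}/2)$, and hence $\rho(z,w)\geq l_{0}/2$. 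This case always applies when $p=q=2$, since then $\tr(\alpha\beta)=-2\cosh\rho(z,w)$ and in fact $T=2\rho(z,w)$; this is precisely the origin of the $l_{0}/2$ term in the theorem.

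Suppose next that $\alpha\beta$ is elliptic of order $r$, fixing a unique point $u\in\hh$. A short check rules out $u\in\{z,w\}$ (if $u=z$ then $\beta=\alpha^{-1}(\alpha\beta)$ would fix both $z$ and $w$, forcing $z=w$). Hence $u\in\E$, and by the minimality of $\rho(z,w)$ the geodesic triangle $z,w,u$ has $\rho(z,w)$ as its shortest side. Since $\alpha,\beta,(\alpha\beta)^{-1}$ are three rotations composing to the identity, the interior angles of the triangle at $z,w,u$ are $\pi/p,\pi/q,\pi/r$, and the shortest-side/smallest-angle correspondence forces $r\geq\max(p,q)$. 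The hyperbolic law of cosines now reads
$$\cosh\rho(z,w)\;=\;\frac{\cos(\pi/r)+\cos(\pi/p)\cos(\pi/q)}{\sin(\pi/p)\sin(\pi/q)},\qquad \tfrac{1}{p}+\tfrac{1}{q}+\tfrac{1}{r}<1,$$
and minimizing the right side over admissible triples with $(p,q)\neq(2,2)$ (those being absorbed into the hyperbolic case) produces the universal constant $0.151\ldots$.

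Combining the two cases (and handling the boundary parabolic case $|\tr(\alpha\beta)|=2$ as a limit of either) delivers the bound $\rho(z,w)\geq\min\{l_{0}/2,\,0.151\ldots\}$. When every elliptic fixed point has order at least $3$, the configuration $(p,q)=(2,2)$ is excluded outright and only the elliptic-case universal bound is needed, so the $l_{0}/2$ term can be dropped. The principal obstacle is the minimization problem in the elliptic case: the triples $(p,q,r)$ with $\tfrac{1}{p}+\tfrac{1}{q}+\tfrac{1}{r}<1$ and $r\geq\max(p,q)$ form a finite enumeration for each small $r$, and one must identify the extremal configuration and evaluate the associated $\cosh^{-1}$ explicitly to obtain the precise numerical constant $0.151\ldots$.
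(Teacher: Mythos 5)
You have written a proof of the wrong statement. Lemma~\ref{lemMain} is not the main theorem: it is the preparatory lemma recording four elementary facts --- (1) the classification of elementary Fuchsian groups as cyclic or conjugate to $\langle h,e\rangle$ with $h(z)=kz$, $e(z)=-1/z$; (2) that two elliptic elements with distinct fixed points generating an \emph{elementary} group must both have order two, with $AB$ hyperbolic of translation length $T_{AB}=2\rho(z,w)$; (3) that any two elliptic elements of order two generate an elementary group; and (4) the displacement identity $\sinh\tfrac{1}{2}\rho(z,\gamma z)=\sinh\rho(z,v)\,|\sin(\theta/2)|$. The paper disposes of (1) and (4) by citation and proves (2) and (3) by pure algebra inside $\langle e,h\rangle$: every element has the form $h^{k}$ or $eh^{k}$, the former hyperbolic, the latter involutions, so elementarity forces order two and $AB$ is conjugate to $h^{m-l}$. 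Your proposal establishes none of these four items; the only overlap is your remark that $p=q=2$ gives $\tr(\alpha\beta)=-2\cosh\rho(z,w)$ and $T=2\rho(z,w)$, which recovers the quantitative half of (2) but not the implication that elementarity forces both orders to be two.

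Even judged as a sketch of the main theorem, your elliptic case contains a genuine gap. The interior angle of the triangle at $u$ equals half the rotation angle of $(\alpha\beta)^{-1}$, i.e.\ $\pi k/r$ for some $k$ coprime to $r$; nothing forces $k=1$, so your law-of-cosines formula with $\cos(\pi/r)$ in the numerator is unjustified. With the corrected angle $C=\pi k/r$, the constraint set you impose (angle sum less than $\pi$, shortest side opposite smallest angle) admits \emph{no} positive universal bound: take $p=2$, $q=3$ and $k/r$ increasing to $1/6$; then $\cosh\rho(z,w)=\cos C/\sin(\pi/3)\to 1$, i.e.\ $\rho(z,w)\to 0$. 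What excludes such configurations is discreteness of $\langle\alpha,\beta\rangle$ --- precisely the content of the Knapp--Matelski classification of two-generator groups, or equivalently of the Marden--Yamada inequality that the paper quotes as Lemma~\ref{lemMY} --- and your proposal, which itself flags the minimization as ``the principal obstacle,'' never supplies that input. Note also that $0.151\dots$ would not be the output of your minimization even under your own (incorrect) assumption $k=1$: the extremal admissible triple there is $(2,3,7)$, giving $\cosh\rho(z,w)\geq\cos(\pi/7)/\sin(\pi/3)$, i.e.\ $\rho(z,w)\geq 0.283\dots$. The paper's constant arises quite differently, as $\sinh^{-1}\bigl(2C/\sqrt{3}\bigr)$ with $C=0.1318\dots$ Yamada's universal constant.
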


\begin{enumerate}

\item An elementary Fuchsian group is either cyclic or conjugate in $\pr$ to the group $\left<h,e \right>$ generated by $h(z) = kz,$ ($k>1$) and $e(z)=-1/z.$  

\item \label{lemElem} Let $A,B \in \pr$ be elliptic elements, with different fixed points $z,w$ respectively, which generate an elementary Fuchsian group. Then $A$ and $B$ both have order two. Furthermore, $AB$ is hyperbolic and the translation length $T_{AB} = 2\rho(z,w).$

\item \label{lemOrTwo} Let $A,B \in \pr$ be elliptic elements of order two. Then $\left<A,B\right>$ is elementary.  

\item \label{lemDisp} Let $\gamma$ be elliptic with fixed point $v$ and angle of rotation $\theta.$ Then for every $z \in \hh,$ 
$$\sinh \tfrac{1}{2}\rho(z,\gamma z) = \sinh \rho(z,v) |\sin(\theta/2)|. $$ 
\end{enumerate}
\begin{proof}
(1) Is found in \cite[p. 37]{Katok}. (4) is found in \cite[Theorem 7.25.1]{Beardon}. 

(2) By (1) $\left< A, B \right>$ is conjugate to $G = \left< e, h \right>.$ Note that $e = e^{-1}, $ $ehe^{-1} = h^{-1},$ and $eh = h^{-1}e.$ It follows that any element of $G$ is of the form $h^{k}$ or $eh^{k},$ where $k \in \ZZ.$ The elements $h^{k}$ are all hyperbolic when $k\neq 0,$ while the $eh^{k}$ are all elliptic of order two since $eh^{k}eh^{k} = eh^{k}h^{-k}e = e^{2} = I.$ Hence $A$ and $B$ must be of order two. Let $A$ be conjugate to $eh^{l},$  $B$ to $eh^{m}.$ Then $AB$ is conjugate to $eh^{l}eh^{m} = h^{-l}eeh^{m} = h^{m-l}$ and is hyperbolic. 

To prove the second statement, by conjugation, assume $A = e,$ $B = f= eh.$ The fixed point of $e(z)$ is $i,$ while the fixed point of $f(z)$ is $\frac{i}{\sqrt{k}}.$ Both points are on the axis of $h(z)$ and since $h(\frac{i}{\sqrt{k}}) = \sqrt{k}i$ it follows that $T_{h} = \rho(\frac{i}{\sqrt{k}},\sqrt{k}i)$ which is twice the distance between the two fixed points of $A$ and $B.$ 

(3) Let $z$ and $w$ be the respective fixed points of $A$ and $B.$ If $z=w$ we are done, so assume $z \neq w.$ Choose $\gamma \in \pr$ with $\gamma(z) = i, \gamma(w) = \tfrac{i}{\sqrt{k}}.$ Now conjugate    $\left<A,B\right>$ with $\gamma,$ reverse the argument in (2) and apply (1).
\end{proof}

For more information about when two generator subgroups are discrete, and for an algorithm to decide the question, see \cite{Maskit}.

Using the above lemma and an elementary argument we obtain 
\begin{prop}
Let $\Gamma$ be a geometrically-finite group let $\E$ be as defined above. 
Let $z,w$ be chosen in $\E$ so that $\rho(z,w)$ is minimal. Let $A,B$ be the respective primitive elliptic elements of $\Gamma$ that fix $z,w$ and $\phi, \theta$ be their respective rotation angles. Assume $\theta \leq \phi.$ Then $$2 \cosh\tfrac{1}{2}\rho(z,w) \geq \frac{1}{|\sin(\theta/2)|}. $$
\end{prop}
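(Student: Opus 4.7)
The plan is to produce a second elliptic fixed point close to $z$ by pushing $z$ forward under the rotation $B$, and then to invoke the minimality of $\rho(z,w)$ together with the displacement formula of Lemma~\ref{lemMain}(\ref{lemDisp}).

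First I would verify that $B(z) \in \E$ and that $B(z) \neq z$. The set $\E$ is $\Gamma$-invariant: since $A$ fixes $z$, the conjugate $BAB^{-1} \in \Gamma$ is elliptic with fixed point $B(z)$, so $B(z) \in \E$. Since $B$ is elliptic, its unique fixed point in $\hh$ is $w \neq z$, so $B(z) \neq z$. Thus $(z, B(z))$ is a legitimate pair of distinct points of $\E$, and the minimality hypothesis forces $\rho(z, B(z)) \geq \rho(z, w)$.

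Next I would apply Lemma~\ref{lemMain}(\ref{lemDisp}) to the elliptic element $B$, whose fixed point is $w$ and whose rotation angle is $\theta$, at the point $z$, giving
\[
\sinh\tfrac{1}{2}\rho(z, Bz) \;=\; \sinh \rho(z, w)\,|\sin(\theta/2)|.
\]
Combining this equality with the previous inequality and the duplication identity $\sinh \rho(z,w) = 2\sinh\tfrac{1}{2}\rho(z,w)\cosh\tfrac{1}{2}\rho(z,w)$ yields
\[
\sinh\tfrac{1}{2}\rho(z,w) \;\leq\; 2\sinh\tfrac{1}{2}\rho(z,w)\,\cosh\tfrac{1}{2}\rho(z,w)\,|\sin(\theta/2)|.
\]
Dividing by the strictly positive quantity $\sinh\tfrac{1}{2}\rho(z,w)$, which is where $z \neq w$ is used, rearranges to the asserted bound.

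The hypothesis $\theta \leq \phi$ plays no structural role in the argument; the identical reasoning applied to $A$ at the point $w$ yields the parallel inequality with $\phi$ in place of $\theta$, which is weaker since $|\sin(\phi/2)| \geq |\sin(\theta/2)|$ when both half-angles lie in $(0,\pi/2]$. Requiring $\theta \leq \phi$ therefore simply selects the sharper of the two inequalities. I do not anticipate any real obstacle: the whole proof is a one-step minimality-plus-displacement estimate, and the only delicate point is verifying that $B(z)$ is a genuinely new elliptic fixed point distinct from $z$, which is immediate from the uniqueness of the fixed point of an elliptic M\"obius transformation in $\hh$.
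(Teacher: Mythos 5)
Your proposal is correct and follows essentially the same route as the paper: observe that $Bz$ is the fixed point of the elliptic element $BAB^{-1}$, invoke minimality to get $\sinh\tfrac{1}{2}\rho(z,Bz)\geq\sinh\tfrac{1}{2}\rho(z,w)$, apply the displacement formula of Lemma~\ref{lemMain}(\ref{lemDisp}), and finish with the identity $\sinh\rho(z,w)=2\sinh\tfrac{1}{2}\rho(z,w)\cosh\tfrac{1}{2}\rho(z,w)$. Your added checks (that $Bz\neq z$, and that choosing the smaller angle $\theta$ gives the sharper bound) are correct refinements of details the paper leaves implicit.
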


\begin{proof}
By the above lemma, $\sinh \tfrac{1}{2}\rho(z,Bz) = \sinh \rho(z,w) |\sin(\theta/2)|.$ Since $\rho(z,w)$ is minimal and since $Bz$ is an elliptic fixed point of the elliptic element $BAB^{-1},$ it follows that $\sinh \tfrac{1}{2}\rho(z,Bz)  \geq \sinh \tfrac{1}{2}\rho(z,w).$  Thus $\sinh \rho(z,w) |\sin(\theta/2)| \geq \sinh \tfrac{1}{2}\rho(z,w).$  But $$\frac{\sinh \rho(z,w)}{\sinh \tfrac{1}{2}\rho(z,w)} = 2 \cosh\tfrac{1}{2}\rho(z,w).  $$ 
\end{proof}

Note that if $\theta = \pi/3,$ the lemma tells us that $2 \cosh\tfrac{1}{2}\rho(z,w)\geq 2$ which is not meaningful.  It is only meaningful when $\theta < \pi/3.$ To get to the main result, which is meaningful in all cases, we need the following:

Let $g,h \in \pr$ define 
$$M(g,h) = \inf_{z \in \hh} m(z) =  \max\{\sinh \tfrac{1}{2}\rho(z,g(z)), \sinh \tfrac{1}{2} \rho(z,h(z)) \}.  $$

Marden established the existence of a universal lower bound on $M$  while Yamada found the best possible lower bound (\cite{Marden, Yamada}).
 
\begin{lem}\cite[p. 313]{Beardon} \label{lemMY}
Let $g,h$ be elliptic elements. Suppose $\left<g,h \right>$ is discrete and non-elementary. Then 
$$M(g,h) \geq C = \left(\frac{4\cos^{2}(\pi/7)-3}{8\cos(\pi/7)+7}    \right)^{1/2} = 0.1318\dots. $$
\end{lem}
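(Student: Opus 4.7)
The plan is to parametrize the pair $(g,h)$ by the orders $p,q$ of these elliptic elements and the hyperbolic distance $a=\rho(z_g,z_h)$ between their fixed points $z_g,z_h$, optimize $m(z)$ over $z\in\hh$, and then use discreteness of $\langle g,h\rangle$ both to bound $a$ from below and to restrict the admissible $(p,q)$ to a finite list.

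First, Lemma~\ref{lemMain}(4) (applied with primitive rotation angles $2\pi/p$ and $2\pi/q$) rewrites
$$
m(z)=\max\bigl\{\sinh\rho(z,z_g)\sin(\pi/p),\;\sinh\rho(z,z_h)\sin(\pi/q)\bigr\}.
$$
Hyperbolic distance from a point is strictly convex along every geodesic, so the infimum of $m$ is attained at a unique $z^{\ast}$ on the geodesic segment joining $z_g$ and $z_h$, and at $z^{\ast}$ the two arguments of the max coincide. Writing $x=\rho(z^{\ast},z_g)$, the balance equation $\sinh(x)\sin(\pi/p)=\sinh(a-x)\sin(\pi/q)$ together with $x+(a-x)=a$ pins down $x$, so that $M(g,h)$ becomes an explicit function $F(a,p,q)$ which is manifestly increasing in $a$ and in each of $p,q$.

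Next, I would use discreteness and non-elementariness to bound $a$ from below. Lemma~\ref{lemMain}(2)--(3) already forces at least one of $p,q$ to exceed $2$. In the remaining cases, the product $gh$ is hyperbolic, parabolic, or elliptic of some order $r$, and the only elliptic possibility compatible with discreteness and non-elementariness gives $1/p+1/q+1/r<1$. Applying the hyperbolic law of cosines to the Poincar\'e triangle with vertex angles $\pi/p,\pi/q,\pi/r$ yields
$$
\cosh a\;\geq\;\frac{\cos(\pi/r)+\cos(\pi/p)\cos(\pi/q)}{\sin(\pi/p)\sin(\pi/q)},
$$
with equality precisely for the $(p,q,r)$-triangle group; the hyperbolic and parabolic cases for $gh$ correspond to $r\to\infty$ and produce strictly larger lower bounds on $a$.

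Substituting the extremal value of $\cosh a$ into $F(a,p,q)$ gives a function $G(p,q,r)$ on the set of admissible integer triples with $1/p+1/q+1/r<1$. Monotonicity in $(p,q,r)$ reduces the search to a handful of small triples, and a direct computation shows the minimum is attained at the Hurwitz triple $(2,3,7)$, which reproduces the stated constant $C=\bigl((4\cos^{2}(\pi/7)-3)/(8\cos(\pi/7)+7)\bigr)^{1/2}$. The main obstacle will be the discreteness step: one must rigorously rule out all non-triangle-group configurations of elliptic pairs whose fixed points sit closer together than the triangle-group bound predicts. Once that classification is in hand, the convex optimization over $z$ and the finite enumeration over $(p,q,r)$ are routine.
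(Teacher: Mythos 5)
The first thing to note is that the paper does not prove this lemma at all: it is quoted from \cite{Beardon}, with the existence of a universal bound credited to \cite{Marden} and the sharp constant $C$ to \cite{Yamada}. So you are not reconstructing the paper's argument; you are attempting to reprove Yamada's theorem from scratch. Your outline does follow the classical strategy (displacement formula, reduction to the geodesic segment joining the fixed points, comparison with triangle groups, extremality of $(2,3,7)$), and your first step is sound: by convexity of $z\mapsto\rho(z,z_g)$ and the distance-decreasing projection onto the geodesic through $z_g,z_h$, the infimum is attained at the point of the segment where $\sinh(x)\sin(\pi/p)=\sinh(a-x)\sin(\pi/q)$.

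However, two of your steps are genuinely broken or missing. First, your monotonicity claim is backwards: $F(a,p,q)$ is increasing in $a$ but \emph{decreasing} in $p$ and $q$, since increasing the order shrinks the rotation angle and hence the displacement --- indeed, fixing $a$ and $q$ and letting $p\to\infty$, the balance point slides to $x=a$ and $F\to 0$. Since the discreteness bound $a_{\min}(p,q,r)$ grows with $p,q,r$ while $F$ shrinks with $p,q$, your function $G(p,q,r)=F(a_{\min}(p,q,r),p,q)$ is governed by two competing effects, and the assertion that ``monotonicity reduces the search to a handful of small triples'' is precisely the statement that needs proof; it is the computational heart of Yamada's theorem, not a routine observation. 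Second, the inequality $\cosh a\ge\bigl(\cos(\pi/r)+\cos(\pi/p)\cos(\pi/q)\bigr)/\bigl(\sin(\pi/p)\sin(\pi/q)\bigr)$ silently assumes that the elliptic product $gh$ rotates by the \emph{primitive} angle $2\pi/r$. The trace identity only tells you that $\cosh a\,\sin(\pi/p)\sin(\pi/q)-\cos(\pi/p)\cos(\pi/q)=\pm\cos(\pi m/r)$ for some $m$ coprime to $r$, and for $m\ge 2$ this is compatible with $a$ strictly \emph{smaller} than the $(p,q,r)$-triangle value. Excluding those configurations (and all two-generator configurations that are not triangle groups) is exactly the classification of discrete groups generated by two elliptics (Knapp's theorem and its refinements), which you yourself flag as ``the main obstacle'' but do not supply. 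As it stands, the proposal is an outline of the known proof with its two hardest steps missing, one of them concealed by an incorrect monotonicity claim; given that the paper treats this result as a citation, the appropriate move here is to cite \cite{Beardon} or \cite{Yamada} rather than attempt a self-contained proof.
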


When the two-generator elliptic group generates a non-elementary group we have the following: 

\begin{lem}\label{lemNonElem}
Let $g,h$ be primitive elliptic elements with respective fixed points $z,w$, and let $\phi, \theta$ be their respective rotation angles. Assume $\theta \leq \phi.$ Suppose $\left<g,h \right>$ is discrete and non-elementary. Then $$\sinh \rho(z,w) \geq \frac{C}{|\sin(\theta/2)|}. $$ \end{lem}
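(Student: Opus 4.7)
The plan is to apply Lemma \ref{lemMY} at a cleverly chosen base point, namely at the elliptic fixed point $z$ of $g$ itself. Since $g(z) = z$, the term $\sinh \tfrac{1}{2}\rho(z, g(z))$ vanishes, so at this particular point the maximum defining $m(z)$ collapses to the single quantity $\sinh \tfrac{1}{2}\rho(z, h(z))$. By hypothesis $\langle g, h \rangle$ is discrete and non-elementary, so Lemma \ref{lemMY} yields $M(g,h) \geq C$, which since $M(g,h)$ is an infimum means $m(z) \geq C$ at our chosen base point, i.e.
$$\sinh \tfrac{1}{2}\rho(z, h(z)) \geq C.$$

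Next I would invoke the displacement formula from Lemma \ref{lemMain} part (\ref{lemDisp}) applied to the elliptic element $h$, whose fixed point is $w$ and whose rotation angle is $\theta$. This gives
$$\sinh \tfrac{1}{2}\rho(z, h(z)) = \sinh \rho(z, w)\, |\sin(\theta/2)|.$$
Substituting into the previous inequality and dividing by $|\sin(\theta/2)|$ (which is strictly positive, since a primitive elliptic element has rotation angle $2\pi/n$ for some integer $n \geq 2$, so $\theta/2 \in (0, \pi/2]$) yields the desired conclusion $\sinh \rho(z,w) \geq C/|\sin(\theta/2)|$.

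The only mild subtlety is the choice of base point. Evaluating $m$ at $w$ rather than $z$ would run through the same argument but produce the weaker inequality $\sinh \rho(z,w) \geq C/|\sin(\phi/2)|$. Under the hypothesis $\theta \leq \phi$ we have $|\sin(\theta/2)| \leq |\sin(\phi/2)|$, so the bound obtained by anchoring at the fixed point of the \emph{larger}-angle rotation (here $g$, with angle $\phi$) is the stronger one, and that is precisely what the statement records. No genuine obstacle remains: the two-line computation above combines the Marden--Yamada constant from Lemma \ref{lemMY} with the standard displacement identity from Lemma \ref{lemMain}(\ref{lemDisp}), and that is the whole proof.
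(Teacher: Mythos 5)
Your proposal is correct and follows exactly the paper's own argument: evaluate $m$ at the fixed point $z$ of $g$ so that the maximum collapses to $\sinh\tfrac{1}{2}\rho(z,h(z))$, apply the displacement identity of Lemma~\ref{lemMain}~(\ref{lemDisp}) to $h$, and invoke the Marden--Yamada bound of Lemma~\ref{lemMY}. Your closing remark on why anchoring at $z$ rather than $w$ yields the stronger bound (since $\theta \leq \phi$) is a nice clarification that the paper leaves implicit.
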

\begin{proof}
Since $z$ is fixed by $g,$ $\rho(z,g(z)) = 0,$ so by Lemma\ref{lemMain}~(\ref{lemDisp}) and Lemma~\ref{lemMY},
$$m(z) = \sinh \tfrac{1}{2} \rho(z,h(z)) = \sinh \rho(z,w) |\sin(\theta/2)| \geq C. $$ 
\end{proof}

We can now prove the main result:
\begin{proof}
Let $z,w$ be chosen in $\E$ so that $\rho(z,w)$ is minimal. Let $A,B$ be the respective primitive elliptic elements of $\Gamma$ that fix $z,w$ and $\phi, \theta$ be their respective rotation angles. Assume $\theta \leq \phi.$ 

First note that if $\theta = \pi$ then $\phi$ must also equal $\pi.$ But if both $A,B$ have order two, then $\left< A, B \right>$ is elementary (Lemma~\ref{lemMain}~(\ref{lemOrTwo}))

If  $\left< A, B \right>$ is elementary, then by Lemma~\ref{lemMain}~(\ref{lemElem}),  
$$\rho(z,w) \geq \frac{l_{0}}{2},$$ 
where $l_{0}$ is the length of the smallest geodesic of $\Gamma \backslash \hh.$

If $\left< A, B \right>$ is non-elementary, then since $\theta \leq \tfrac{2\pi}{3},$ by Lemma~\ref{lemNonElem} 
$$\sinh(\rho(z,w)) \geq \frac{C}{|\sin(\theta/2)|} \geq C \frac{2}{\sqrt{3}} = 0.152\dots. $$
Thus, in all cases $\rho(z,w) \geq \min\{\tfrac{l_{0}}{2}, \sinh^{-1}(0.152) \}.$
\end{proof}

\begin{rem*}
\emph{A possible application of these ideas lies in the subject of elliptic degeneration of Riemann surfaces. Take a surface with $r$ classes of parabolic cusps. One can find a sequence of compact surfaces, with $r$ classes of elliptic fixed points, that converge to the non-compact surface. In the limit, the elliptic points become the parabolic cusps. The bounds found in this article would imply that the neighborhoods of parabolic cusps can't get close together since none of the elliptic points would have order two. See \cite{Gabin} for more details on elliptic degeneration. } 
\end{rem*}

\bibliographystyle{amsalpha} \bibliographystyle{amsalpha}
\bibliography{ellp}

\end{document}